\newtheorem{lemma}{Lemma}
\lstdefinelanguage{maple}%
  {morekeywords={and,assuming,break,by,catch,description,do,done,elif,else,end,error,export,fi,finally,for,from,global,if,implies,in,intersect,local,minus,mod,module,next,not,od,option,options,or,proc,quit,read,return,save,stop,subset,then,to,try,union,until,use,uses,while,xor},%
   morendkeywords={algebraic,anyfunc,anything,atomic,boolean,complex,constant,cx_infinity,cx_zero,embedded_axis,embedded_imaginary,embedded_real,equation,even,extended_numeric,extended_rational,finite,float,fraction,function,identical,imaginary,indexable,indexed,integer,list,literal,module,moduledefinition,name,neg_infinity,negative,negint,negzero,nonnegative,nonnegint,nonposint,nonpositive,nonreal,numeric,odd,polynom,pos_infinity,posint,positive,poszero,procedure,protected,radical,range,rational,ratpoly,real_infinity,realcons,relation,sequential,set,sfloat,specfunc,string,symbol,tabular,uneval,zppoly,ASSERT,Array,ArrayOptions,CopySign,DEBUG,Default0,DefaultOverflow,DefaultUnderflow,ERROR,EqualEntries,EqualStructure,FromInert,Im,MPFloat,MorrBrilCull,NameSpace,NextAfter,Normalizer,NumericClass,NumericEvent,NumericEventHandler,NumericStatus,Object,OrderedNE,RETURN,Re,Record,SDMPolynom,SFloatExponent,SFloatMantissa,Scale10,Scale2,SearchText,TRACE,ToInert,Unordered,UpdateSource,_hackwareToPointer,_jvm,_local,_maplet,_savelib,_treeMatch,_unify,_xml,abs,add,addressof,anames,andmap,appendto,array,assemble,assign,assigned,attributes,bind,call_external,callback,cat,coeff,coeffs,conjugate,convert,crinterp,debugopts,define_external,degree,denom,diff,disassemble,divide,dlclose,done,entries,eval,evalb,evalf,evalgf1,evalhf,evalindets,evaln,expand,exports,factorial,frem,frontend,gc,genpoly,gmp_isprime,goto,has,hastype,hfarray,icontent,ifelse,igcd,ilog10,ilog2,implies,indets,indices,inner,iolib,iquo,irem,is_gmp,isqrt,kernelopts,lcoeff,ldegree,length,lexorder,lhs,localGridInterfaceRun,lowerbound,lprint,macro,map,map2,max,maxnorm,member,membertype,min,minus,mod,modp,modp1,modp2,mods,mul,mvMultiply,negate,nops,normal,numboccur,numelems,numer,op,order,ormap,overload,parse,piecewise,pointto,print,print_preprocess,readlib,reduce_opr,remove,rhs,rtable,rtableInfo,rtable_convolution,rtable_eval,rtable_histogram,rtable_indfns,rtable_is_zero,rtable_normalize_index,rtable_num_dims,rtable_num_elems,rtable_options,rtable_redim,rtable_scale,rtable_scanblock,rtable_size,rtable_sort_indices,rtable_zip,savelib,searchtext,select,selectremove,seq,series,setattribute,sign,sort,ssystem,stop,streamcall,subs,subset,subsindets,subsop,substring,system,table,taylor,tcoeff,time,timelimit,traperror,trunc,type,typematch,unames,unbind,upperbound,userinfo,wbOpen,wbOpenURI,writeto,~Array,~Matrix,~Vector,args,nargs,procname,RootOf,Float,thismodule,thisproc,_options,_noptions,_rest,_nrest,_params,_nparams,_passed,_npassed,_nresults,static,Catalan,true,false,FAIL,infinity,Pi,gamma,integrate,libname,NULL,Order,printlevel,lasterror,lastexception,Digits,constants,undefined,I,UseHardwareFloats,Testzero,Normalizer,NumericEventHandlers,Rounding,Catalan,FAIL,Pi,false,gamma,infinity,true,ansi,echo,errorbreak,errorcursor,indentamount,labeling,labelwidth,patchlevel,plotdevice,plotoptions,plotoutput,postplot,preplot,prettyprint,printbytes,prompt,quiet,screenheight,screenwidth,showassumed,verboseproc,version,warnlevel,ASSERT,bytesalloc,bytesused,cputime,dagtag,gcbytesavail,gcbytesreturned,gctimes,maxdigits,maximmediate,memusage,printbytes,profile,system,version,wordsize,_Inert,And,Non,Not,Or,SERIES,SymbolicInfinity,TEXT,algebraic,algext,algfun,algnum,algnumext,anyfunc,anything,arctrig,atomic,boolean,complex,complexcons,constant,cubic,cx_infinity,cx_zero,embedded_axis,embedded_imaginary,embedded_real,equation,even,evenfunc,expanded,extended_numeric,extended_rational,facint,finite,float,fraction,function,hfloat,identical,imaginary,indexable,indexed,infinity,integer,laurent,linear,list,listlist,literal,logical,mathfunc,matrix,moduledefinition,monomial,name,neg_infinity,negative,negint,negzero,nonnegative,nonnegint,nonposint,nonpositive,nonreal,nothing,numeric,odd,oddfunc,package,point,polynom,pos_infinity,posint,positive,poszero,prime,protected,quadratic,quartic,radext,radfun,radfunext,radical,radnum,radnumext,range,rational,ratpoly,real_infinity,realcons,relation,scalar,sequential,set,sfloat,specfunc,specindex,sqrt,stack,string,symbol,symmfunc,tabular,trig,truefalse,truefalseFAIL,undefined,uneval,vector,zppoly,AFactor,AFactors,AiryAi,AiryAiZeros,AiryBi,AiryBiZeros,AngerJ,ArrayDims,ArrayElems,ArrayIndFns,ArrayNumDims,Berlekamp,BesselI,BesselJ,BesselJZeros,BesselK,BesselY,BesselYZeros,Beta,Cache,ChebyshevT,ChebyshevU,CheckArgs,Chi,Ci,Complex,ComplexRange,Content,CoulombF,CylinderD,CylinderU,CylinderV,D,DESol,Describe,Det,Diff,Dirac,DistDeg,Divide,Ei,EllipticCE,EllipticCK,EllipticE,EllipticF,EllipticK,EllipticPi,Eval,Expand,Explore,ExportVector,Factor,Factors,Fraction,FresnelC,FresnelS,Fresnelf,Fresnelg,GAMMA,GF,Gausselim,Gaussjord,Gcd,Gcdex,GegenbauerC,HFloat,HankelH1,HankelH2,Heaviside,Hermite,HermiteH,HeunB,HeunBPrime,HeunC,HeunCPrime,HeunD,HeunDPrime,HeunG,HeunGPrime,HeunT,HeunTPrime,ImportVector,Indep,Int,Intat,Interp,InverseJacobiAM,InverseJacobiCD,InverseJacobiCN,InverseJacobiCS,InverseJacobiDC,InverseJacobiDN,InverseJacobiDS,InverseJacobiNC,InverseJacobiND,InverseJacobiNS,InverseJacobiSC,InverseJacobiSD,InverseJacobiSN,Irreduc,IsMatrixShape,IsVectorShape,IsWorksheetInterface,JacobiAM,JacobiCD,JacobiCN,JacobiCS,JacobiDC,JacobiDN,JacobiDS,JacobiNC,JacobiND,JacobiNS,JacobiP,JacobiSC,JacobiSD,JacobiSN,JacobiTheta1,JacobiTheta2,JacobiTheta3,JacobiTheta4,KelvinBei,KelvinBer,KelvinHei,KelvinHer,KelvinKei,KelvinKer,KummerM,KummerU,LaguerreL,Lcm,LegendreP,LegendreQ,Limit,LommelS1,LommelS2,MOLS,MathieuA,MathieuB,MathieuC,MathieuCE,MathieuCEPrime,MathieuCPrime,MathieuExponent,MathieuFloquet,MathieuFloquetPrime,MathieuS,MathieuSE,MathieuSEPrime,MathieuSPrime,Matrix,MatrixOptions,MeijerG,Normal,Nullspace,Power,Powmod,Prem,Primfield,Primitive,Product,Psi,Quo,RESol,Randpoly,Ratrecon,RealRange,Rem,Resultant,Roots,Shi,Si,Smith,Sqrfree,Ssi,Stirling1,Stirling2,String,StruveH,StruveL,Sum,TopologicalSort,Trace,Vector,WARNING,WeberE,WeierstrassP,WeierstrassPPrime,WeierstrassSigma,WeierstrassZeta,WhittakerM,WhittakerW,Wrightomega,about,addcoords,additionally,addproperty,algsubs,alias,allvalues,andseq,apply,applyop,applyrule,arccos,arccosh,arccot,arccoth,arccsc,arccsch,arcsec,arcsech,arcsin,arcsinh,arctan,arctanh,assume,asympt,bernoulli,bernstein,binomial,branches,ceil,charfcn,chrem,coeftayl,collect,combine,comparray,compiletable,compoly,content,convergs,copy,cos,cosh,cot,coth,coulditbe,csc,csch,dataplot,define,definemore,depends,dilog,dinterp,discont,discrim,dismantle,dsolve,eliminate,ellipsoid,erf,erfc,erfi,euler,eulermac,evala,evalapply,evalc,evalr,evalrC,example,exists,exp,extrema,factor,factors,fdiscont,fixdiv,floor,fnormal,forall,forget,frac,freeze,fsolve,galois,gcd,gcdex,getassumptions,hasassumptions,hasfun,hasoption,help,history,identify,ifactor,ifactors,igcdex,ilcm,ilog,implicitdiff,index,info,initialcondition,insertpattern,int,intat,interp,intsolve,invfunc,invztrans,iperfpow,iratrecon,iroot,irreduc,is,iscont,isolate,isolve,ispoly,isprime,isqrfree,issqr,ithprime,latex,lcm,leadterm,limit,ln,lnGAMMA,log,log10,log2,maptype,match,maximize,minimize,modpol,msolve,mtaylor,nextprime,norm,nprintf,odetest,orseq,packages,patmatch,plot,plot3d,plotsetup,poisson,polylog,powmod,prem,prevprime,primpart,printf,product,proot,protect,psqrt,quo,radfield,radnormal,rand,randomize,randpoly,rationalize,ratrecon,readdata,readstat,realroot,redefine,reduce,related,rem,residue,resultant,root,rootbound,roots,round,rsolve,rtable_dims,rtable_elems,scanf,sec,sech,selectfun,shake,showtime,signum,simplify,sin,singular,sinh,sinterp,smartplot,smartplot3d,solve,sprem,sprintf,sqrfree,sscanf,sturm,sturmseq,subtype,sum,surd,symmdiff,tablelook,tan,tanh,testeq,thaw,trigsubs,unapply,unassign,undefine,unprotect,unwindK,usage,value,verify,version,whattype,xormap,xorseq,ztrans,interface,readline,with,unwith}%
  otherkeywords={\%,\%\%,\%\%\%,\$define,\$elif,\$else,\$endif,\$file,\$ifdef,\$ifndef,\$include,\$undef},%
  sensitive=true,%
  morestring=[b]",%
  morestring=[b]`,%
  morecomment=[l]\#,%
  morecomment=[s]{(*}{*)}%
  }[keywords,comments,strings]
\newcommand{\diag}[1]{\ensuremath{\mathrm{diag}{#1}}}
\newcommand{\scof}[1]{\ensuremath{y_{#1}}}
\newcommand{\complex}{\mathbb{C}}
\newcommand{\fieldF}{\mathbb{F}}
\newcommand{\mat}[1]{\ensuremath{\boldsymbol{#1}}}
\newcommand{\A}{\ensuremath{\mat{A}}}
\newcommand{\B}{\ensuremath{\mat{B}}}
\newcommand{\C}{\ensuremath{\mat{C}}}
\newcommand{\D}{\ensuremath{\mat{D}}}
\newcommand{\E}{\ensuremath{\mat{E}}}
\newcommand{\F}{\ensuremath{\mat{F}}}
\newcommand{\G}{\ensuremath{\mat{G}}}
\renewcommand{\H}{\ensuremath{\mat{H}}}
\newcommand{\I}{\ensuremath{\mat{I}}}
\renewcommand{\L}{\ensuremath{\mat{L}}}
\renewcommand{\P}{\ensuremath{\mat{P}}}
\newcommand{\R}{\ensuremath{\mat{R}}}
\renewcommand{\S}{\ensuremath{\mat{S}}}
\newcommand{\U}{\ensuremath{\mat{U}}}
\newcommand{\V}{\ensuremath{\mat{V}}}
\newcommand{\W}{\ensuremath{\mat{W}}}
\newcommand{\Y}{\ensuremath{\mat{Y}}}
\newcommand{\Z}{\ensuremath{\mat{Z}}}
\renewcommand{\emph}[1]{\textsl{#1}}
\newtheorem{theorem}{Theorem}[section]
\newtheorem{acks}[theorem]{Acknowledgements}
\providecommand{\keywords}[1]
{
  \small	
  \textbf{\textit{Keywords---}} #1
}
\providecommand{\amsclassification}[1]
{
  \small	
  \textbf{\textit{2010 Mathematics Subject Classification---}} #1
}
\definecolor{darkgreen}{rgb}{0,.35,0}
\definecolor{darkblue}{rgb}{0,0,.5}
\definecolor{darkred}{rgb}{.6,0,0}
\begin{document}

\title{Equivalences for Linearizations of Matrix Polynomials}
\author[1]{Robert M.~Corless\footnote{rcorless@uwaterloo.ca}}
\author[1]{Leili Rafiee Sevyeri\footnote{leili.rafiee.sevyeri@uwaterloo.ca}}
\author[2]{B.~David Saunders\footnote{saunders@udel.edu}}
\affil[1]{David R.~Cheriton School of Computer Science\\ University of Waterloo
,Canada}
\affil[2]{Department of Computer and Information Sciences University of Delaware}
\date{}   

\maketitle
\begin{abstract}
One useful standard method to compute eigenvalues of matrix polynomials $\P(z) \in \mathbb{C}^{n\times n}[z]$ of degree at most~$\ell$ in~$z$ (denoted \emph{of grade $\ell$}, for short) is to first transform $\P(z)$ to an equivalent \emph{linear} matrix polynomial $\L(z)=z\B-\A$, called a companion pencil, where $\A$ and $\B$ are usually of larger dimension than $\P(z)$ but $\L(z)$ is now only of grade~$1$ in $z$. The eigenvalues and eigenvectors of $\L(z)$ can be computed numerically by, for instance, the QZ algorithm.
The eigenvectors of $\P(z)$, including those for infinite eigenvalues, can also be recovered from eigenvectors of $\L(z)$ if $\L(z)$ is what is called a ``strong linearization'' of $\P(z)$.  
In this paper we show how to use algorithms for computing the Hermite Normal Form of a companion matrix for a scalar polynomial to direct the discovery of unimodular matrix polynomial \emph{cofactors} $\E(z)$ and $\F(z)$ which, via the equation $\E(z)\L(z)\F(z) = \diag( \P(z), \I_n, \ldots, \I_n)$, explicitly show the equivalence of $\P(z)$ and $\L(z)$.
By this method we give new explicit constructions for several linearizations using different polynomial bases. We contrast these new unimodular pairs with those constructed by strict equivalence, some of which are also new to this paper. We discuss the limitations of this experimental, computational discovery method of finding unimodular cofactors.  
\end{abstract}

%





\keywords{linearization, matrix polynomials, polynomial bases, equivalence, Hermite normal form, Smith normal form}

\amsclassification{65F15, 15A22, 65D05}


\section{Introduction}
Given a field $\fieldF$ and a set of polynomials $\phi_k(z) \in \fieldF[z]$ for $0 \le k \le \ell$ that define a basis for polynomials of grade $\ell$ (``grade'' is short for ``degree at most'') then a square \emph{matrix polynomial} $\P(z)$ is an element of $\fieldF^{n\times n}[x]$ which we can write as
\begin{displaymath}
\P(z) = \sum_{k=0}^\ell \A_k \phi_k(z)\>.
\end{displaymath}
The matrix coefficients $\A_k$ are elements of $\fieldF^{n\times n}$. For concrete exposition, take $\fieldF$ to be the field of complex numbers $\complex$. 
The case when the ``leading coefficient'' is singular---meaning the matrix coefficient of $z^\ell$, once the polynomial is expressed in the monomial basis---can also be of special interest.  
Normally, only with the case of \emph{regular} square matrix polynomials, for which there exists some $z^* \in \fieldF$ with $\det \P(z^*) \ne 0$ is considered.  Although our intermediate results require certain nonsingularity conditions, our results using strict equivalence are ultimately valid even in the case when the determinant of $\P(z)$ is identically zero.  

Matrix polynomials are of significant classical and current interest: see the surveys~\cite{guttel2017nonlinear} and~\cite{mackey2015polynomial} for more information about their theory and applications.  In this present paper we use Maple to make small example computations in order to discover and prove certain facts about one method for finding eigenvalues of matrix polynomials, namely \emph{linearization}, which means finding an equivalent grade $1$ matrix polynomial $\L(z)$ when given a higher-grade matrix polynomial $\P(z)$ to start.

The paper~\cite{amiraslani2008linearization} was the first to systematically study matrix polynomials in alternative polynomial bases.  See~\cite{BuenoCachadina2020} for a more up-to-date treatment.
\section{Definitions and Notation}
\label{sect:defs}
A \emph{companion pencil} $\L(z)=z\B-\A$ for a matrix polynomial $\P(z)$ has the property that $\det\L(z) = \alpha\det\P(z)$ for some nonzero $\alpha \in \fieldF$.  This means that the eigenvalues of the companion pencil are the eigenvalues of the matrix polynomial.

A \emph{linearization} $\L(z)$ of a matrix polynomial $\P(z)$ has a stronger requirement: a linearization is a pencil $\L(z) = z\B - \A$ which is \emph{equivalent} to $\P(z)$ in the following sense: there exist unimodular\footnote{In this context, the matrix polynomial $\A(z)$ is \emph{unimodular} if and only if $\det\A(z) \in \fieldF \setminus {0}$ is a nonzero constant field element.} matrix polynomial \emph{cofactors} $\E(z)$ and $\F(z)$ which satisfy $\E(z)\L(z)\F(z) = \diag(\P(z), \I_n, \ldots, \I_n)$.  We write $\I_n$ for the $n \times n$ identity matrix here.

Linearizations preserve information not only about eigenvalues, but also eigenvectors and invariant factors.  Going further, a \emph{strong} linearization is one for which the matrix pencil $-z\L(1/z) = z\A-\B $ is a linearization for the reversal of $\P(z)$, namely $z^\ell \P(1/z)$.  Strong linearizations also preserve information about eigenstructure at infinity.

The Hermite normal form of a matrix polynomial $\L$ is an upper triangular matrix polynomial $\H$ unimodularly row equivalent to $\L$, which is to say $\L = \E\H$, where $\E$ is a matrix polynomial with $\det(\E)$ a nonzero constant. See ~\cite{storjohann1994computation} for properties and Hermite form algorithm description. 
We will chiefly use the computation of the Hermite normal form as a step in the process of discovering unimodular matrix polynomials $\E(z)$
and $\F(z)$ that demonstrate that $\L(z)$ linearizes $\P(z)$.

Another technique, which gives a stronger result (when you can do it) is to prove \emph{strict equivalence}.  We say matrix pencils $\L_1(z)$ and $\L_2(z)$ are \emph{strictly equivalent} if there exist constant unimodular matrices $\U$ and $\W$ with 
$\U \L_1(z) \W = \L_2(z)$.  We will cite and show some strict equivalence results in this paper, and prove a new strict equivalence for some Lagrange bases linearizations.

The paper~\cite{Dopico2020} introduces a new notion in their Definition~2, that of a \emph{local} linearization of \emph{rational} matrix functions: this definition allows $\E(z)$ and $\F(z)$ to be \emph{rational} unimodular transformations, and defines a local linearization only on a subset $\Sigma$ of $\fieldF$.  Specifically, by this definition, two rational matrices $\G_1(z)$ and $\G_2(z)$ are locally equivalent if there exist rational matrices $\E(z)$ and $\F(z)$, invertible for all $z \in \Sigma$, such that $\G_1(z) = \E(z)\G_2(z)\F(z)$ for all $z \in \Sigma$.

This allows exclusion of poles of $\E(z)$ or $\F(z)$, for instance.  It turns out that several authors, including~\cite{amiraslani2008linearization}, had in effect been using rational matrices and local linearizations for matrix polynomials.  For most purposes, in skilled hands this notion provides all the analytical tools that one needs.  However, as we will see, unimodular polynomial equivalence is superior in some ways.  This paper therefore is motivated to see how far such local linearizations, in particular for the Bernstein basis and the Lagrange interpolational bases, can be strengthened to unimodular matrix polynomials. The point of this paper, as a symbolic computation paper, is to see how experiments with built-in code for Hermite normal form can help us to discover such cofactors.

We write
\begin{equation}
B_{k}^{\ell}(z) = \binom{\ell}{k}z^k(1-z)^{\ell-k}\qquad 0 \le k \le \ell \label{eq:BernsteinDefinition}
\end{equation}
for the Bernstein polynomials of degree $\ell$.  This set of $\ell+1$ polynomials forms a basis for polynomials of grade~$\ell$.

We will use the convention of writing $\A_k$ for the coefficients when the matrix polynomial is expressed in the monomial basis or in another basis with a three-term recurrence relation among its elements, $\Y_k$ for the coefficients when the matrix polynomial is expressed in the Bernstein basis, and $\P_k$ for the coefficients when the matrix polynomial is expressed in a Lagrange basis.  We will use lower-case letters for scalar polynomials. We may write the (matrix) coefficient of $\phi_k(z)$ for an arbitrary basis element as $[\phi_k(z)]\P(z)$; for instance, the ``leading coefficient'' of $\P(z)$, considered as a grade $\ell$ polynomial, is $[z^\ell]\P(z)$.

\section{Explicit Cofactors}
We find cofactors for orthogonal bases, the Bernstein basis, and Lagrange interpolational bases, all modulo certain exceptions.  We believe all of these are new. These results are restricted to certain cases; say for the Bernstein basis when the coefficient $\Y_\ell = [B^\ell_\ell(z)]\P(z)$ (which is not the same as $[z^\ell]\P(z)$) is nonsingular. The method of this paper does not succeed in that case to find cofactors that demonstrate universal linearization.  In fact, however, it is known that this Bernstein companion pencil is indeed a true linearization; we will give two references with two different proofs, and give our own proof in section~\ref{sec:strictLagrange}.

Similarly, this method produces new cofactors for Lagrange interpolational bases, but in this case restricted to when the \emph{coefficients} $\P_k$ (which are actually the values of the matrix polynomial at the interpolational nodes) are all nonsingular.  We also have an \emph{algebraic} proof of equivalence, which we have cut for space reasons; this gives another proof that the Lagrange interpolational basis pencils used here are, in fact, linearizations. 
In section~\ref{sec:strictLagrange} we prove strict equivalence.
\subsection{Monomial Basis}
Given a potential linearization $\L(z) = z\C_1-\C_0$, it is possible to discover the matrices $\E(z)$ and $\F(z)$ by computing the generic \emph{Hermite Form}\footnote{The Smith form, which is related, is also useful here; but we found that the Maple implementation of the Hermite Form~\cite{storjohann1994computation} gave a simpler answer, although we had to compute the matrix $\F(z)$ separately ourselves because the Hermite form is upper triangular, not diagonal.}, with respect to the variable $z$, for instance by using a modestly sized example of~$\L(z)$ and a symbolic computation system such as Maple.  The generic form that we obtain is of course not correct on specialization of the symbolic coefficients, and in particular is incorrect if the leading coefficient is zero; but we may adjust this by hand to find the following construction, which we will not detail in this section but will in the next.

By using the five-by-five scalar case, with symbolic coefficients which we write as $a_k$ instead of $\A_k$ because they are scalar, we find that if
\begin{equation}
\E(z) =    \left[ \begin {array}{ccccc} 1&h_4&h_3&h_2&h_1\\ \noalign{\medskip}0
&0&0&0&-1\\ \noalign{\medskip}0&0&0&-1&-z\\ \noalign{\medskip}0&0&-1&-
z&-{z}^{2}\\ \noalign{\medskip}0&-1&-z&-{z}^{2}&-{z}^{3}\end {array}
 \right] 
\end{equation}
where $h_5 = a_5$ and $h_{k} = a_k + zh_{k+1}$ for $k=4$, $3$, $2$, $1$ are the partial Horner evaluations of $p(z) = a_5z^5 + \cdots + a_0$, and if
\begin{equation}
    \F(z) =  \left[ \begin {array}{ccccc} {z}^{4}&0&0&0&1\\ \noalign{\medskip}{z}^
{3}&0&0&1&0\\ \noalign{\medskip}{z}^{2}&0&1&0&0\\ \noalign{\medskip}z&
1&0&0&0\\ \noalign{\medskip}1&0&0&0&0\end {array} \right] \>,
\end{equation}
and if
\begin{equation}
    \L(z) =  \left[ \begin {array}{ccccc} za_{{5}}+a_{{4}}&a_{{3}}&a_{{2}}&a_{{1}}
&a_{{0}}\\ \noalign{\medskip}-1&z&0&0&0\\ \noalign{\medskip}0&-1&z&0&0
\\ \noalign{\medskip}0&0&-1&z&0\\ \noalign{\medskip}0&0&0&-1&z
\end {array} \right] \>,
\end{equation}
then we have $\E(z)\L(z)\F(z) = \diag(p(z), 1, 1, 1, 1)$.  Moreover, $\det \E(z) = \pm 1$ and $\det \F(z) = \pm 1$, depending only on dimension.

Once this form is known, it is easily verified for general grades and quickly generalizes to matrix polynomials, establishing (as is well-known) that this form (known as the \emph{second companion form}) is a linearization.

Note that the polynomial coefficients $a_k$ appear linearly in $\E(z)$ and the unimodular matrix polynomials $\E$ and $\F$ are thus universally valid.
\subsection{Three-term Recurrence Bases}
\label{recur-subsec}
The monomial basis, the shifted monomial basis, the Taylor basis, the Newton interpolational bases, and many common orthogonal polynomial bases all have three-term recurrence relations that, for $k\ge 1$, can be written
\begin{equation}
	z\phi_{k}(z) = \alpha_{k}\phi_{k+1}(z) + \beta_{k}\phi_{k}(z) + \gamma_{k}\phi_{k-1}(z) \>.
\end{equation}
All such polynomial bases require $\alpha_{k}\ne 0$.
For instance, the Chebyshev polynomial recurrence is usually written $T_{n+1}(z) = 2zT_n(z) - T_{n-1}(z)$ but is easily rewritten in the above form by isolating $zT_n(z)$, and all Chebyshev $\alpha_k = 1/2$ for $k>1$.
We 
refer the reader to section 18.9 of the Digital Library of Mathematical Functions (\url{dlmf.nist.gov}) for more. See also \cite{gautschi2016orthogonal}.

For all such bases, we have the linearization\footnote{For exposition, we follow Peter Lancaster's dictum, namely that the $5\times 5$ case almost always gives the idea.} $\L(z) = zC_1 - C_0$ where
\begin{equation}
    C_1 =
    \left[\begin{array}{c|cccc}
    	\dfrac{a_{5}}{\alpha_{4}} & & & & \\
    \hline
    & \I_4\\
    \end{array}\right]
\end{equation}
and
\begin{equation}
   C_0 = \left[\begin{array}{c|cccc}
    	-a_{4} + \dfrac{\beta_{4}}{\alpha_{4}}a_{5} & -a_{3} + \dfrac{\gamma_{4}}{\alpha_{4}}a_{5} & -a_{2} & -a_{1} & -a_{0} \\
    	\hline
        \alpha_{3} & \beta_{3} & \gamma_{3} & & \\
        & \alpha_{2} & \beta_{2} & \gamma_{2} & \\
        & & \alpha_{1} & \beta_{1} & \gamma_{1} \\
        & & & \alpha_{0} & \beta_{0}
    \end{array}\right] \>,
\end{equation}
In Maple we compute the Hermite normal form of a grade $5$ example with symbolic coefficients by the following commands:
\begin{lstlisting}
with(LinearAlgebra):
m := 5:
poly := add(a[k]*ChebyshevT(k, z), k = 0 .. m):
(C0, C1) := CompanionMatrix(poly, z):
\end{lstlisting}
That procedure does not use the same convention we use here and so we apply the standard involutory permutation (SIP) matrix to it and transpose it to place the polynomial coefficients in the top row.
\begin{lstlisting}
J := Matrix( m, m, (i,j)->`if`( i+j=m+1, 1, 0 )):
R := C1*z - C0:
JRJT := Transpose((J . R) . J):
\end{lstlisting}
Now compute the generic Hermite normal form:
\begin{lstlisting}
(HH, UU) := HermiteForm( JRJT,z,output=['H','U']):
\end{lstlisting}
That returns matrices such that $\U\L = \H$, or $\L = \U^{-1}\H$.  We now look at their structure.
\begin{lstlisting}
mask := proc(A::Matrix) 
  map(t -> `if`(t = 0, 0, x), A);
end proc:
mask( HH );
\end{lstlisting}
This produces
\begin{equation}
    \left[\begin{array}{ccccc}x  & 0 & 0 & 0 & x  \\0 & x  & 0 & 0 & x  \\0 & 0 & x  & 0 & x  \\0 & 0 & 0 & x  & x  \\0 & 0 & 0 & 0 & x  \end{array}\right]
\end{equation}
and a separate investigation shows the diagonal entries are (as is correct in the generic case) all just $1$, until the lower corner entry which is a monic version of the original polynomial. The matrix $\U$ has a simple enough shape in this case, but $\U^{-1}$ is more convenient:
\begin{lstlisting}
mask( UU^(-1) );
\end{lstlisting}
produces
\begin{equation}
\left[\begin{array}{ccccc}x  & x  & x  & x  & x  \\x  & x  & x  & 0 & 0 \\0 & x  & x  & x  & 0 \\0 & 0 & x  & x  & 0 \\0 & 0 & 0 & x  & 0 \end{array}\right]\>.
\end{equation}
Because the first $m-1$ columns of $\H$ is a subset of the identity matrix, the first $m-1$ columns of $\U^{-1}$ are the same as the first $m-1$ columns of $\L$.  Since the final column of $\U^{-1}$ has only one nonzero entry, at the top, call that $u_0$.  Call the entries in the final column of $\H$ in descending order $h_{m-1}$, $h_{m-2}$,$\ldots$, $h_1$; let us suppose that the final entry is a multiple $cp(z)$ of the scalar polynomial.

Multiplying out $\U^{-1}\H$ and equating its final column to the final column of $\L(z)$ gives us a set of (triangular, as it happens) equations in the unknowns.  These allow us to deduce a general form for $\U^{-1}(z)$, and to prove it is unimodular.  
Once we have $\U$ and $\H$ with $\U\L(z) = \H$, construction of unimodular $\E(z)$ and $\F(z)$ so that $\E(z)\L(z)\F(z) = \diag(\P(z), \I_n, \ldots,\I_n)$ is straightforward.

The results are slightly simpler in this case to present as inverses: Here we show the Chebyshev scalar case.
\begin{equation}
    \E^{-1} = \left[\begin{array}{ccccc}1 & a_{1} & a_{2} & a_{3}-a_{5} & 2 z a_{5}+a_{4} \\0 & 0 & -\frac{1}{2} & z  & -\frac{1}{2} \\0 & -\frac{1}{2} & z  & -\frac{1}{2} & 0 \\0 & z  & -\frac{1}{2} & 0 & 0 \\0 & -1 & 0 & 0 & 0 \end{array}\right]
\end{equation}
and
\begin{equation}
    \F^{-1} = \left[\begin{array}{ccccc}0 & 0 & 0 & 0 & 1 \\0 & 0 & 0 & 1 & -T_{1}\! \left(z \right) \\0 & 0 & 1 & 0 & -T_{2}\! \left(z \right) \\0 & 1 & 0 & 0 & -T_{3}\! \left(z \right) \\1 & 0 & 0 & 0 & -T_{4}\! \left(z \right) \end{array}\right].
\end{equation}
It is not immediately obvious that $\E(z)$ is unimodular, but it is. For $\F(z)$ it is obvious.

Notice again that the result is linear in the unknown polynomial coefficients, and that we therefore have a universal equivalence (once generalized to the matrix polynomial case, and of arbitrary dimension, which is straightforward).
\section{Bernstein Basis}
\label{bernstein-subsec}
The set of Bernstein polynomials $B^\ell_k(z)$ in equation~\eqref{eq:BernsteinDefinition} is a set of $\ell+1$ polynomials each of exact degree $\ell$ that together forms a basis for polynomials of grade~$\ell$ over fields $\fieldF$ of characteristic zero. 
Bernstein polynomials have many applications, for example in Computer Aided Geometric Design (CAGD), and many important properties including that of optimal condition number over all bases positive on $\left[0, 1\right]$. They do not satisfy a simple three term recurrence relation of the form discussed in Section~\ref{recur-subsec}, although they satisfy an interesting and useful ``degree-elevation'' recurrence, namely
\begin{equation}
     (j+1)B_{j+1}^n(z) + (n-j)B_{j}^n(z) = n B_{j}^{n-1}(z)\>,
\end{equation}
which specifically demonstrates that a sum of Bernstein polynomials of degree~$n$ might actually have degree strictly less than~$n$.
See~\cite{farouki2012bernstein},~\cite{farouki1996optimal}, and~\cite{farouki1987numerical} for more details of Bernstein bases.

A Bernstein linearization for $p_{5}(z) = \sum_{k=0}^{5}\scof{k}B_{k}^{5}(z)$ is 
$\L(z) =$
\begin{align}\label{eq:BernsteinPencil}
   \begin{bmatrix}
    	 \frac{1}{5}\scof{5}z
    	 +\scof{4}(1-z) &
    	 \scof{3}(1-z) &
    	 \scof{2}(1-z) &
    	 \scof{1}(1-z) &
    	 \scof{0}(1-z) \\
        z-1 & \frac{2}{4}z & & & \\
        & z-1 & \frac{3}{3}z & & \\
        & & z-1 & \frac{4}{2}z & \\
        & & & z-1 & \frac{5}{1}z
    \end{bmatrix} \>.
\end{align}
    

The pattern on the diagonal is written in unreduced fractions so that the general pattern is more clear.

For a construction of rational unimodular $\E(z)$ and $\F(z)$ that show that this is a \emph{local} linearization, in the case that no eigenvalue occurred at the end of the Bernstein interval ($z=1$), see~\cite{amiraslani2008linearization}. The discussion in~\cite{mackey2016linearizations} shows by strict equivalence that it is in fact a global strong linearization independently of any singularities, or indeed independently of regularity of the matrix polynomial.
We will give here only an explicit construction of unimodular polynomial matrices $\E(z)$ and $\F(z)$, again unless $z=1$, and requiring that the ``leading'' block $[B^\ell_\ell(z)](\P(z)) = \Y_\ell$ be nonsingular.

The linearization used here was first analyzed in~\cite{jonsson2001eigenvalue} and~\cite{jonsson2004solving} (as a companion pencil for scalar polynomials) and has been further studied and generalized in~\cite{mackey2016linearizations}. One of the present authors independently invented and implemented a version of this linearization in the Maple command \texttt{CompanionMatrix} (except using $\P^{\mathrm{T}}(z)$, and flipped from the above form) in about $2004$. For a review of Bernstein linearizations, see the aforementioned~\cite{mackey2016linearizations}.
For a proof of their numerical stability, see the original thesis~\cite{jonsson2001eigenvalue}.  

\subsection{Hermite form of the linearization}
We use the same approach as before, probing with a small example and computing its Hermite normal form explicitly to suggest the correct form.

The resulting suggested form for $\U^{-1}$ is
\begin{equation}
    \left[\begin{array}{ccccc}x  & x  & x  & x  & x  \\x  & x  & 0 & 0 & x  \\0 & x  & x  & 0 & x  \\0 & 0 & x  & x  & x  \\0 & 0 & 0 & x  & x  \end{array}\right]
\end{equation}
which is more complicated than before, because the final column is full (as before, the first $\ell-1$ columns merely copy the companion pencil $\L(z)$). Moreover, this time the entries in $\U^{-1}$ are \emph{polynomial} functions of the $y_k$, $0 \le k < \ell$, not just linear; but involve negative powers of $\scof{\ell}$ (this equivalence appears to be new: the treatment in~\cite{mackey2016linearizations} is implicit, while the treatment in~\cite{amiraslani2008linearization} only gives a local linearization).  This means that in the case $\Y_\ell$ is singular, $z=1$ is an eigenvalue and something else must be done to linearize the matrix polynomial.

We find a recurrence relation for the unknown blocks in both the purported Hermite normal form and in the inverse of the cofactor. Put $    \U^{-1} = $
\begin{equation}
    \begin{bmatrix}
    z\Y_\ell/\ell-(z-1)\Y_{\ell-1} & -(z-1)\Y_{\ell-2} & \cdots & -(z-1)\Y_{1} & \W_\ell \\
    -(z-1)\I_n & 2z\I_n/(\ell-1) & & & \W_{\ell-1} \\
     & -(z-i)\I_n &\ddots & & \vdots \\
     & & & -(z-1)\I_n & \W_0 \\
    \end{bmatrix}
\end{equation}
which, apart from the final column, is the same as the linearization $\L(z)$, and the Hermite normal form analogue as
\begin{equation}
    \H = \begin{bmatrix}
    \I_n & & & & \H_{\ell-1} \\
         & \I_n & & & \H_{\ell-2} \\
         &      & \ddots & & \vdots \\
         & & & \I_n & \H_1 \\
         & & &      & \P(z)
    \end{bmatrix}\>.
\end{equation}
Multiplying out $\U^{-1}\H$ we find that for the lower right corner block
\begin{equation}
    -(z-1)\H_1 + \W_0 \P(z) = \ell z \I_n\>. \label{eq:corner}
\end{equation}
A separate investigation shows that $\W_0$ must be a constant matrix.  Evaluating that equation at $z=1$ shows that $\W_0 = \ell \P^{-1}(1)$, \emph{so $\P(1)$ must be nonsingular} for this form to
hold.  Once $\W_0$ is identified, equation~\eqref{eq:corner} can be solved uniquely for the grade $\ell-1$ matrix polynomial $\H_1(z)$:
\begin{equation}
    \H_1(z) = \frac{\ell}{z-1}\left( \P^{-1}(1)\P(z) - z\I_n\right)\>.
\end{equation}
Division is exact there, as can be checked by expanding the numerator of the right-hand side in Taylor series about $z=1$.

The other block entries in the multiplied-out equation give a triangular set of recurrences for the $\W_k$ and the $\H_k(z)$ that are solvable in the same way and under the same condition, namely that $\P(1)$ must be nonsingular. 

Once we have $\U$ and $\H$ with $\U\L(z) = \H$, construction of unimodular $\E(z)$ and $\F(z)$ so that $\E(z)\L(z)\F(z) = \diag(\P(z), \I_n, \ldots,\I_n)$ is straightforward.
\subsection{Strict Equivalence\label{sec:strictBernstein}}
For the pencil in equation~\eqref{eq:BernsteinPencil} we exhibit the strict equivalence by the unimodular matrices $\U$ and $\W$ giving $\L_m(z)=\U \L_{B}(z)\W$, below; for ease of understanding, we actually show $\U^{-1}$ which shows its relationship to $\W$ more clearly:
\begin{equation}
    \W = \left[\begin{array}{ccccc}5 & 0 & 0 & 0 & 0 \\-10 & 10 & 0 & 0 & 0 \\10 & -20 & 10 & 0 & 0 \\-5 & 15 & -15 & 5 & 0 \\1 & -4 & 6 & -4 & 1 \end{array}\right]
\end{equation}
and
\begin{equation}
\U^{-1} = \left[\begin{array}{ccccc}1 & h_3 & h_2 & h_1 & -y_{0} \\0 & 5 & 0 & 0 & 0 \\0 & -10 & 10 & 0 & 0 \\0 & 10 & -20 & 10 & 0 \\0 & -5 & 15 & -15 & 5 \end{array}\right]\>,
\end{equation}
where $h_3 = -10 y_{3}+20 y_{2}-15 y_{1}+4 y_{0}$, $h_2 = -10 y_{2}+15 y_{1}-6 y_{0}$, and $h_1 = -5 y_{1}+4 y_{0}$.
Generalizing these to matrix polynomials is straightforward, as is generalizing these to arbitrary grade.

The paper~\cite{mackey2016linearizations} contains, in its equation~(3.6), a five by five example including tensor products that shows how to find a strict equivalence of the pencil here to the strong linearizations they construct in their paper.  Both the results of that paper and the construction given by example above demonstrate that the Bernstein linearization here is a strong linearization, independently of the singularity of $\P(1)$ and indeed independently of the regularity of the matrix polynomial.

\subsection{A new reversal\label{sec:bernreversal}}
We here present a slightly different \emph{reversal}, namely rev~$p(z) = (z+1)^\ell p(1/(z+1))$ of a polynomial of grade~$\ell$ expressed in a Bernstein basis, instead of the standard reversal $z^\ell p(1/z) $.
This new reversal has a slight numerical advantage if all the coefficients of $p(z)$ are the same sign.  We also give a proof that the linearization of this reversal is the corresponding reversal of the linearization, thus giving a new independent proof that the linearization is a strong one.
This provides the details of the entries in the 
unimodular matrix polynomials $\E(z)$ and $\F(z)$ with $\E(z)\L(z)\F(z) = \diag{\P(z),\I_n,\ldots,\I_n}$ constructed above.

A short computation shows that if 
\begin{equation}
    p(z) = \sum_{k=0}^\ell y_k B_k^\ell(z)
\end{equation}
then
\begin{equation}
    \mathrm{rev}\, p(z) = (z+1)^\ell p\left(\frac{1}{z+1}\right) = \sum_{k=0}^\ell d_k B_k^\ell(z)
\end{equation}
where
\begin{equation}
    d_k = \sum_{j=0}^k \binom{ k}{j } y_{\ell-j}\>,
\end{equation}
whereas the coefficients of the standard reversal are, in contrast,
\begin{equation}
    e_k = \sum_{m=0}^{\ell-k} (-1)^m \binom{\ell-k}{m} y_{\ell-m-k}
\end{equation}
which has introduced sign changes, which may fail to preserve numerical stability if all the $y_k$ are of one sign.  A further observation is that the coefficient $d_0$ only involves $y_\ell$, while $e_0$ involves all $y_k$; $d_1$ involves $y_\ell$ and $y_{\ell-1}$ while $e_1$ involves all but $y_0$, and so on; in
that sense, this new reversal has a more analogous behaviour to
the monomial basis reversal, which simply reverses the list of coefficients.   

For interest, we note that if $(\A,\B)$ is a linearization for $p(z)$ so that $p(z) = \det( z\B - \A)$, then reversing the linearization by this transformation is not a matter of simply interchanging $\B$ and $\A$: 
\begin{align}
(z+1)^\ell p\left(\frac{1}{z+1}\right) &= (z+1)^\ell\det \left(\frac{1}{z+1} \B - \A \right) \nonumber\\
&= \det( \B - (z+1)\A ) \nonumber\\
&= \det( \B-\A - z \A ) 
\end{align}
and so the corresponding reversed linearization is $(\A, \B-\A)$.  The
sign change is of no importance.

Suppose that the Bernstein linearization of $p(z)$ is $(\A,\B)$ and that the Bernstein linearization of rev~$p(z)$ is $(\A_R,\B_R)$.  That is, the matrices $\A_R$ and $\B_R$ have the same form as that of $\A$ and $\B$, but where $(\A,\B)$ contain $y_k$s the matrices $(\A_R,\B_R)$ contain $d_k$s. 
To give a new proof that the Bernstein linearization is actually a strong linearization, then, we must find a pair of unimodular matrices $(\U,\W)$ which have $\U \A_R \W = \B-\A$ and $\U\B_R\W = \A$, valid for all choices of coefficients $y_k$ (which determine the corresponding reversed coefficients $d_k$ by the formula above).  


First, it simplifies matters to deal not with $\W$ but rather with $\W^{-1}$. Then, our defining conditions become
\begin{align}
    \U \A_R &= \left(\B-\A\right)\W^{-1} \\
    \U \B_R &= \A\W^{-1}\>,
\end{align}
which are linear in the unknowns (the entries of $\U$ and of $\W^{-1}$).  By inspection of the first few dimensions, we find
that $\U$ and $\W^{-1}$ have the following form (using the six-by-six case, for variation, to demonstrate).  The anti-diagonal of the general $\U$ has entries $-(\ell-i+1)/i$ for $i=1$, $2$, $\ldots$, $\ell$.
\begin{equation}
\U =     \left[ \begin {array}{cccccc} 0&0&0&0&0&-6\\ \noalign{\medskip}0&0&0&0
&-{\frac{5}{2}}&u_{{2,6}}\\ \noalign{\medskip}0&0&0&-{\frac{4}{3}}&u_{
{3,5}}&u_{{3,6}}\\ \noalign{\medskip}0&0&-{\frac{3}{4}}&u_{{4,4}}&u_{{
4,5}}&u_{{4,6}}\\ \noalign{\medskip}0&-{\frac{2}{5}}&u_{{5,3}}&u_{{5,4
}}&u_{{5,5}}&u_{{5,6}}\\ \noalign{\medskip}-{\frac{1}{6}}&u_{{6,2}}&u_
{{6,3}}&u_{{6,4}}&u_{{6,5}}&u_{{6,6}}\end {array} \right] 
\end{equation}
and
\begin{equation}
    \W^{-1} =  \left[ \begin {array}{cccccc} 0&0&0&0&z_{{1,5}}&z_{{1,6}}
\\ \noalign{\medskip}0&0&0&z_{{2,4}}&z_{{2,5}}&z_{{2
,6}}\\ \noalign{\medskip}0&0&z_{{3,3}}&z_{{3,4}}&z_{
{3,5}}&z_{{3,6}}\\ \noalign{\medskip}0&z_{{4,2}}&z_{
{4,3}}&z_{{4,4}}&z_{{4,5}}&z_{{4,6}}
\\ \noalign{\medskip}z_{{5,1}}&z_{{5,2}}&z_{{5,3}}&
z_{{5,4}}&z_{{5,5}}&z_{{5,6}}\\ \noalign{\medskip}0&0
&0&0&0&1\end {array} \right] 
\end{equation}
One can then guess the explicit general formulae
\begin{align}
    u_{i,j} &= -\frac{(\ell-i+1)}{i}\binom{i}{\ell+1-j} \qquad 1\le i,j\le \ell\\
    z_{i,j} &= -\frac{(\ell-i)}{j}\binom{i}{\ell-j} \qquad 1\le i,j \le \ell-1\\
    z_{i,\ell} &= d_i - \frac{(\ell-i)}{\ell}y_\ell\>,\qquad 1\le i \le \ell-1
\end{align}
and then prove that these are not only \emph{necessary} for the equations above, but also \emph{sufficient}.  The matrix $\B-\A$ is diagonal and gives a direct relationship between the triangular block in $\W^{-1}$ and a corresponding portion of $\U$; the other equation gives a recurrence relation for the entries of $\U$.  Comparison of the final columns of the products gives an explicit formula for the final column of $\W^{-1}$ and an explicit formula for the entries of $\U$ by comparison of the coefficients of the symbols $y_k$; this formula can be seen to verify the recurrence relation found earlier, closing the circle and establishing sufficiency.  Both matrices $\U$ and $\W$ have determinant $\pm 1$: $\lfloor \ell/2 \rfloor$ row-permutations brings $\U$ to upper triangular form and the determinant $(-1)^{\ell}$ (times $(-1)^{\lfloor \ell/2\rfloor}$) can be read off as the product of the formerly anti-diagonal elements, and similarly for the $[1:\ell-1,1:\ell-1]$ block of $\W^{-1}$ which gives a sign $(-1)^{\ell-1+\lfloor (\ell-1)/2 \rfloor}$.

\section{Lagrange Interpolational Bases}
A useful arrowhead companion matrix pencil for polynomials given in a Lagrange basis was given in~\cite{corless2004generalized,corless2004bernstein}.  Later, Piers Lawrence recognized that the mathematically equivalent pencil re-ordered by similarity transformation by the standard involutory permutation (SIP) matrix so that the arrow was pointing up and to the left was numerically superior, in that one of the spurious infinite eigenvalues will be immediately deflated---without rounding errors---by the standard QZ algorithm~\cite{lawrence2012numerical}.  We shall use that variant in this paper.  

For expository purposes, consider interpolating a scalar polynomial $p(z)$ on the four distinct nodes $\tau_k$ for $0 \le k \le 3$.  In the first barycentric form~\cite{trefethen} this is
\begin{equation}\label{lagrange}
p(z) = w(z) \sum_{k=0}^3 \frac{\beta_k p_k}{z-\tau_k} 
= \sum_{k=0}^3 p_k w_k(z),
\end{equation}
where the node polynomial $w(z) = (z-\tau_0)(z-\tau_1)(z-\tau_2)(z-\tau_3)$, and $w_k(z) = \beta_k w(z)/(z-\tau_k)$, and the barycentric weights $\beta_k$ are
\begin{equation}
    \beta_k = \prod_{j=0 \& j\neq k}^3 \frac{1}{\tau_k-\tau_j}\>.
\end{equation}
Then a Schur complement with respect to the bottom right $4\times 4$ block shows that if
\begin{equation}
    L(z) = \left[\begin{array}{ccccc}0 & -p_{3} & -p_{2} & -p_{1} & -p_{0} \\\beta_3 & z -\tau_{3} & 0 & 0 & 0 \\\beta_2 & 0 & z -\tau_{2} & 0 & 0 \\\beta_1 & 0 & 0 & z -\tau_{1} & 0 \\\beta_0 & 0 & 0 & 0 & z -\tau_{0} \end{array}\right]
\end{equation}
then $\det L(z) = p(z)$.  By exhibiting a \emph{rational} unimodular equivalence, the paper~\cite{amiraslani2008linearization} showed that the general form of this was at least a local linearization for matrix polynomials $\P(z)$, and also demonstrated that this was true for the reversal as well, showing that it was a strong (local) linearization.  They also gave indirect arguments, equivalent to the notion of patched local linearizations introduced in~\cite{Dopico2020}, showing that the construction gave a genuine strong linearization.  Here, we wish to see if we can explicitly construct unimodular matrix polynomials $\E(z)$ and $\F(z)$ which show equivalence, directly demonstrating that this is a linearization.
\subsection{Hermite form of the linearization}
As we did for the monomial basis, we start with a scalar version. 
We compute the Hermite normal form $\H$, and the transformation matrix $\U$ so that $\U \L(z) = \H$, with Maple to give clues to find the general form.  When we do this for the grade $3$ example above we find that the form of $\U$ is not helpful, but that the form of $\U^{-1}$ and $\H$ are:
\begin{equation}
    L(z) = \left[\begin{array}{ccccc}0 & x  & x  & x  & 0 \\x  & x  & 0 & 0 & x  \\x  & 0 & x  & 0 & x  \\x  & 0 & 0 & x  & x  \\x  & 0 & 0 & 0 & 0 \end{array}\right]
    \left[\begin{array}{ccccc}1  & 0 & 0 & 0 & x  \\0 & 1  & 0 & 0 & x  \\0 & 0 & 1  & 0 & x  \\0 & 0 & 0 & 1  & x  \\0 & 0 & 0 & 0 & x  \end{array}\right]\>.
\end{equation}
As is usual, the generic Hermite normal form contains $p(z)$ in the lower right corner; on specialization of the polynomial coefficients this can change, of course.  We return to this point later.

This gives us enough information to conjecture the general form in the theorem below, and a proof follows quickly.

\begin{theorem}
If
\begin{equation}
    L(z) = \left[\begin{array}{ccccc}0 & -\P_{\ell} & -\P_{\ell-1} & \ldots & -\P_{0} \\\beta_{\ell}\I_n & (z -\tau_{\ell})\I_n & 0 & 0 & 0 \\\beta_{\ell-1}\I_n & 0 & (z -\tau_{\ell-1})\I_n & 0 & 0 \\\vdots &  &  & \ddots &  \\\beta_0\I_n & 0 & 0 & 0 & (z -\tau_{0})\I_n \end{array}\right]
\label{eq:LagrangePencil}
\end{equation}
and no $\P_k$ is singular then $L(z) = \U^{-1} \H$ where $\U^{-1} =$
\begin{equation*}
 \left[\begin{array}{cccccc}
    0 & -\P_{\ell} & -\P_{\ell-1} & \ldots & -\P_{1} & 0  \\
    \beta_{\ell}\I_n & (z -\tau_{\ell})\I_n & 0 & 0 & \cdots & \U_\ell \\
    \beta_{\ell-1}\I_n & 0 & (z -\tau_{\ell-1})\I_n & 0 & \cdots & \U_{\ell-1} \\
    \vdots &  &  & \ddots & & \\
    \beta_1\I_n & 0 & 0 & \cdots & (z-\tau_1)\I_n & \U_1 \\
    \beta_0\I_n & 0 & 0 & 0 & \cdots & 0  \end{array}\right]\>,
\end{equation*}
and $\H$ is the identity matrix with its final column replaced with a block matrix that can be partitioned as
\begin{displaymath}
\H =     \left[\begin{array}{cccccc}\I_n  &  &  &  & & \G  \\ & \I_n  &  &  &  & \H_\ell  \\ &  & \I_n  &  &  & \H_{\ell-1}  \\ &  &  & \ddots &  & \vdots  \\
 &   &   &  & \I_n & \H_1 \\
 &   &   &  &    &  \P(z)  \end{array}\right]\>.
\end{displaymath}
Moreover,
\begin{equation}
    \G = - \frac{1}{\beta_0}(z-\tau_0)\I_n\>,
\end{equation}
\begin{equation}
    \U_k = -\frac{\beta_k}{\beta_0}(\tau_k-\tau_0)\P_k^{-1}\>, \label{eq:Uk}
\end{equation}
for $1 \le k \le \ell$, and
\begin{equation}
    \H_k = \frac{1}{z-\tau_k}\left( \beta_k \G - \U_k\P(z)\right)\>.\label{eq:Hk}
\end{equation}
Note that the definition of $\U_k$ in equation~\eqref{eq:Uk} ensures that the division in equation~\eqref{eq:Hk} is exact, and therefore $\H_k$ is a matrix polynomial of grade $\ell-1$.
Furthermore, $\U$ is unimodular.
\end{theorem}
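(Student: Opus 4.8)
The plan is to verify directly that the stated $\U^{-1}$ and $\H$ satisfy $L(z) = \U^{-1}\H$, and then to establish unimodularity of $\U$ by computing $\det \U^{-1}$ and showing it is a nonzero constant. First I would carry out the block multiplication $\U^{-1}\H$ column by column, exploiting the arrowhead-plus-last-column structure. Since $\H$ is the identity except for its last column, for the first $\ell$ columns the product $\U^{-1}\H$ simply reproduces the first $\ell$ columns of $\U^{-1}$, which by construction already match the first $\ell$ columns of $L(z)$. So the entire content of the identity $L(z) = \U^{-1}\H$ lies in the final column. Writing out that final column of $\U^{-1}\H$ block-row by block-row gives: in the top block-row, $-\sum_{j=1}^{\ell}\P_j \H_{j+1} \cdot(\text{shift}) + 0\cdot\P(z)$ — more precisely one gets $\G$ contributions and a term $-\P_k$ times the appropriate $\H$ — which must equal the top-right entry of $L(z)$, namely $-\P_0$; in the $k$-th arrowhead block-row (for $1\le k\le \ell$) one gets $\beta_k\G + (z-\tau_k)\H_k + \U_k\,\P(z)$ wait — recompute carefully — one gets $\beta_k\,\G \;+\; (z-\tau_k)\,\H_k \;+\; \U_k\cdot 0$ from the identity part plus $\U_k$ against the $\P(z)$ in the bottom — so the condition is $\beta_k\G + (z-\tau_k)\H_k + \U_k\P(z)$ hmm, and this must equal $0$ (the $k$-th block entry of the last column of $L(z)$). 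Rearranging gives exactly $\H_k = \frac{1}{z-\tau_k}(\beta_k\G - \U_k\P(z))$ once we know the $\U_k\P(z)$ and $\beta_k\G$ terms combine with the correct signs; and the bottom block-row forces $\beta_0\G = -(z-\tau_0)\I_n$, i.e. the stated formula for $\G$, and leaves $\P(z)$ in the corner as required.

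The one genuine subtlety — and the step I expect to be the main obstacle — is showing that the division in equation~\eqref{eq:Hk} is exact, i.e. that $\beta_k\G - \U_k\P(z)$ is divisible by $z-\tau_k$ in $\complex^{n\times n}[z]$, so that $\H_k$ is an honest matrix polynomial (of grade $\ell-1$) rather than a rational matrix. By the factor theorem this is equivalent to $\beta_k\G(\tau_k) - \U_k\P(\tau_k) = 0$. Now $\G(\tau_k) = -\frac{1}{\beta_0}(\tau_k - \tau_0)\I_n$, and the crucial point is that $\P(\tau_k) = \P_k$ exactly, because the $\P_k$ are by definition the values of the matrix polynomial at the interpolation nodes (this is noted in the text when the Lagrange basis is introduced). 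Substituting the definition~\eqref{eq:Uk} of $\U_k$ gives $\beta_k\G(\tau_k) - \U_k\P_k = -\frac{\beta_k}{\beta_0}(\tau_k-\tau_0)\I_n + \frac{\beta_k}{\beta_0}(\tau_k-\tau_0)\P_k^{-1}\P_k = 0$, which is exactly why the formula~\eqref{eq:Uk} was chosen. This also simultaneously shows why no $\P_k$ may be singular: the inverse $\P_k^{-1}$ must exist for $\U_k$ to be defined. The analogous exactness for the top block-row follows in the same way (or can be sidestepped by deriving the top row's entries from the already-verified lower rows).

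Finally, for unimodularity I would compute $\det\U^{-1}$ by block operations. Since the first column of $\U^{-1}$ is the arrowhead column $(0,\beta_\ell\I_n,\ldots,\beta_0\I_n)^{\mathrm T}$ and the bottom block-row is $(\beta_0\I_n,0,\ldots,0)$, one can expand along the bottom block-row: the only surviving block-minor deletes the first block-column and the last block-row, leaving an upper-block-triangular matrix whose block-diagonal entries are $-\P_\ell, (z-\tau_\ell)\I_n$ — no: after removing the first column one is left with the columns headed by $-\P_\ell,\ldots,-\P_1$ and $\U_\ell,\ldots$; the submatrix is block upper triangular with diagonal blocks $(z-\tau_\ell)\I_n,\ldots,(z-\tau_1)\I_n$ in rows $1,\ldots,\ell-1$ and then — actually the cleanest route is to note $\U L(z) = \H^{-1}\cdot$ (nothing) — instead: from $L(z) = \U^{-1}\H$ we get $\det L(z) = \det(\U^{-1})\det\H = \det(\U^{-1})\det\P(z)$, and independently $\det L(z)$ equals $\pm(\prod\beta_k)\det\P(z)$ times a sign from the Schur-complement arrowhead computation (the scalar case gave $\det L(z) = p(z)$ up to the $\beta_k$ normalization, as recorded above). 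Comparing, $\det\U^{-1}$ is a nonzero constant, independent of $z$ and of the $\P_k$, hence $\U$ is unimodular. I would present the direct block-cofactor expansion as the primary argument and mention the determinant-comparison as a cross-check.
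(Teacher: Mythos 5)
Your overall architecture is correct and matches the paper's: multiply $\U^{-1}\H$, observe that the first $\ell+1$ block columns of $\H$ are an identity so the first $\ell+1$ block columns of the product reproduce those of $\U^{-1}$ (which already agree with $L(z)$), and reduce the whole claim to the last block column. You also correctly identify the $\ell+1$ block equations that fall out, and your verification of the middle and bottom ones — that the chosen $\G$ satisfies the last equation and that the chosen $\U_k$ makes $\beta_k\G-\U_k\P(z)$ vanish at $z=\tau_k$ (using $\P(\tau_k)=\P_k$) so the division defining $\H_k$ is exact — is exactly the paper's argument there.

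The gap is the top block row. That equation is $-\sum_{k=1}^{\ell}\P_k\H_k = -\P_0$ (equivalently $\sum_{k=1}^{\ell}\P_k\H_k = \P_0$), and it is a genuine polynomial identity that must be \emph{proved}, not another divisibility check. You wave it away with ``the analogous exactness for the top block-row follows in the same way (or can be sidestepped by deriving the top row's entries from the already-verified lower rows),'' but neither remark is a proof: the top block row of $\U^{-1}$ ends in $0$, so $\P(z)$ never multiplies anything and there is no factor of $z-\tau_k$ to divide out; and the top-row blocks $-\P_\ell,\ldots,-\P_1,0$ of $\U^{-1}$ are \emph{given}, not derived — they are constrained by this equation, which is what has to be checked. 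In the paper's proof this step is where essentially all of the work lives: one substitutes the explicit $\H_k$, expands, and then needs the partial-fraction identity $1/w(z)=\sum_k\beta_k/(z-\tau_k)$, its consequence $z/w(z)=\sum_k\beta_k\tau_k/(z-\tau_k)$ (which gives $\sum_k\beta_k=0$), and the first barycentric form $\P(z)=w(z)\sum_k\beta_k\P_k/(z-\tau_k)$, to collapse the resulting sums to $\P_0$. None of that appears in your proposal, and it is not a formality — without those barycentric identities the top-row equation is not obvious and the factorization $L=\U^{-1}\H$ is unestablished.

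On unimodularity, your determinant-comparison idea ($\det L=\det\U^{-1}\det\H=\det\U^{-1}\det\P$, compared to an independently known formula for $\det L$) is legitimate in spirit, but you leave it as a ``cross-check'' and never actually supply the independent formula for $\det L(z)$ in the matrix case (the paper only records the scalar identity $\det L=p(z)$), nor do you show the constant is nonzero. Your ``primary'' route, the block-cofactor expansion, is abandoned mid-sentence. The paper instead applies the Schur determinant formula directly to $\U^{-1}$ and identifies the quantities $\beta_k(\tau_k-\tau_0)$ as the barycentric weights on the node set with $\tau_0$ deleted, from which $\det\U^{-1}=\pm1$ drops out cleanly. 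Either route can be made to work, but as written you have not completed either one.
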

\begin{proof}
Block multiplication of the forms of $\U^{-1}$ and $\H$ gives $\ell+1$ block equations:
\begin{align}
    -\sum_{k=1}^\ell \P_k \H_k &= \P_0 \label{eq:P0eq}\\
    \beta_k \G + (z-\tau_k)\H_k + \U_k \P(z) &= 0, 1 \le k \le \ell \\
   \beta_0\G &= (z-\tau_0)\I_n \>.
\end{align}
The last block equation identifies $\G$.
Putting $z=\tau_k$ in each of the middle block equations identifies each $\U_k$.  Once that has been done, the middle block equations define each $\H_k$.  All that remains is to show that these purported solutions satisfy equation~\eqref{eq:P0eq} 
and that the resulting matrix $\U$ is unimodular. 

We will need the following facts about the node polynomial $w(z) = \prod_{k=0}^\ell (z-\tau_k)$, the barycentric weights $\beta_k$, and the first barycentric form for $\P(z)$:
\begin{equation}
    \frac{1}{w(z)} = \sum_{k=0}^\ell \frac{\beta_k}{z-\tau_k}\>, \label{eq:parfrac}
\end{equation}
\begin{equation}
    \frac{z}{w(z)} = \sum_{k=0}^\ell \frac{\beta_k\tau_k}{z-\tau_k}\>,
    \label{eq:zwz}
\end{equation}
and
\begin{equation}
    \P(z) = {w(z)}\sum_{k=0}^\ell \frac{\beta_k}{z-\tau_k}\P_k\>.
\end{equation}
By substituting $z=0$ in equation~\eqref{eq:zwz} we find that the sum of the barycentric weights is zero\footnote{This is true even if one of the $\tau_k$ is zero, inductively because one factor of $z$ then cancels in the numerator and denominator on the left hand side and the result is one degree lower.}.

We now substitute equations~\eqref{eq:Hk}--\eqref{eq:Uk} into the left hand side of equation~\eqref{eq:P0eq} to get
\begin{equation}
    \sum_{k=1}^\ell \frac{1}{z-\tau_k}\P_k\left(
    -\frac{\beta_k}{\beta_0}(z-\tau_0) + \frac{\beta_k}{\beta_0}(\tau_k-\tau_0) \P_k^{-1}\P(z)
    \right) \>.
\end{equation}
Expanding this, we have
\begin{align}
\mathrm{LHS} = & 
-\frac{(z-\tau_0)}{\beta_0}\sum_{k=1}^\ell \frac{\beta_k}{z-\tau_k}\P_k + 
\sum_{k=1}^\ell \frac{\beta_k(\tau_k-\tau_0)}{\beta_0(z-\tau_k)} \P(z)
\nonumber\\
= & -\frac{(z-\tau_0)}{\beta_0}\left[\frac{1}{w(z)}\P(z) - \frac{\beta_0}{z-\tau_0}\P_0\right]   \nonumber\\
 &\qquad + \frac{1}{\beta_0}\left[\frac{z}{w(z)} - \frac{\beta_0\tau_0}{z-\tau_0}\right]\P(z) - \frac{\tau_0}{\beta_0}\left[\frac{1}{w(z)}-\frac{\beta_0}{z-\tau_0}\right]\P(z)\nonumber\\
&= \P_0\>.
\end{align}
This shows that we have found a successful factoring analogous to the scalar Hermite normal form.

All that remains is to show that $\U(z)$ is unimodular.  We use the Schur determinantal formula, and identify the $\beta_k(\tau_k-\tau_0)$ as the barycentric weights on the nodes with $\tau_0$ removed and we see that up to sign (depending on dimension) the determinant
simplifies to $1$.  
This completes the proof.
\end{proof}
As a corollary, we can explicitly construct unimodular matrix polynomials $\E(z)$ and $\F(z)$ from these factors showing that, if each $\P_k$ is nonsingular, $L(z)$ is equivalent to $\diag(\P(z),\I_n,\ldots,\I_n)$. 

As in~\cite{amiraslani2008linearization} we may use $LU$ factoring to show that $\P$ is also equivalent to $\L$ at the nodes, essentially using Proposition 2.1 of~\cite{Dopico2020}.  We also have a purely algebraic proof based on local equivalence of Smith forms, not shown here.

\subsection{Linearization equivalence 
via local Smith form}
In this section we approach the equivalence of $\P$ to its linearization via localizations. 
The argument does not require that $\P$ or its evaluations $\P_k$ be nonsingular.
However, this method is less explicit about the form of the unimodular cofactors.  Also, a stronger result, strict equivalence, is shown in section \ref{sec:strictLagrange}. The line of argument here, that local information may be brought together to obtain a global equivalence, may be useful for other equivalence problems lacking strict equivalence.

The localizations here are in the algebraic sense. 
$\fieldF[z]$ is a principal ideal ring as is each localization, $\fieldF[z]_f = \fieldF[z]/M_f$, at an irreducible $f\in \fieldF[z]$, where $M_f$ denotes the multiplicatively closed set of all $g \in \fieldF[z]$ relatively prime to $f$. 
The ideals of $\fieldF[x]_f$ are the multiples of a given power of $f$, $I_k = f^k\fieldF[x]_f.$ 
Working over such a localization, unimodular cofactors may have  denominators $d(z)$ relatively prime to $f$.  
Then this algebraically local solution is a local solution in the analytic sense of section \ref{sect:defs}, being valid for all $z$ not a zero of $d$. 
\begin{lemma}Matrices $A,B \in \fieldF[z]^{n\times n}$ are equivalent if and only if they are equivalent over every localization $\fieldF[z]_f$ at an irreducible $f(z)$.
\end{lemma}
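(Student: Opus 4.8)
The plan is to invoke the Smith normal form over $\fieldF[z]$ and show that equivalence is detected locally, invariant-factor by invariant-factor. Recall that $\fieldF[z]$ is a principal ideal domain, so every $A \in \fieldF[z]^{n\times n}$ has a Smith normal form $\U A \V = \diag(s_1, s_2, \ldots, s_n)$ with $\U, \V$ unimodular over $\fieldF[z]$ and $s_1 \mid s_2 \mid \cdots \mid s_n$, the invariant factors, unique up to unit multiples. Two matrices $A, B$ are equivalent over $\fieldF[z]$ if and only if they have the same Smith normal form, i.e.\ the same invariant factors up to units (equivalently, the same determinantal divisors $d_k = \gcd$ of all $k\times k$ minors). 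So the task reduces to: the invariant factors agree globally if and only if they agree in every localization $\fieldF[z]_f$.

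First I would establish the ``only if'' direction, which is immediate: if $\U A \V = B$ with $\U, \V$ unimodular over $\fieldF[z]$, then the same identity holds over each $\fieldF[z]_f$, since $\fieldF[z] \hookrightarrow \fieldF[z]_f$ and units map to units. For the ``if'' direction, the key observation is that $\fieldF[z]_f$ is again a PID (it is a localization of a PID, in fact a discrete valuation ring when $f$ is irreducible), with a single prime $f$ up to units; its only nonzero ideals are the $I_k = f^k \fieldF[z]_f$. Hence equivalence of $A$ and $B$ over $\fieldF[z]_f$ is equivalent to the equality, for every $k$, of the $f$-adic valuations of the $k$-th determinantal divisors: $v_f(d_k(A)) = v_f(d_k(B))$, where $v_f$ is the valuation at $f$. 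Now run over all irreducible $f$: if $A$ and $B$ are equivalent over every $\fieldF[z]_f$, then $v_f(d_k(A)) = v_f(d_k(B))$ for all $f$ and all $k$. Since $d_k(A)$ and $d_k(B)$ are polynomials in $\fieldF[z]$ and a polynomial is determined up to a scalar by its valuation at every irreducible (i.e.\ by its factorization), we get $d_k(A) = c_k\, d_k(B)$ for nonzero constants $c_k \in \fieldF$. Therefore the invariant factors $s_k = d_k/d_{k-1}$ agree up to units, so $A$ and $B$ have the same Smith normal form over $\fieldF[z]$, hence are equivalent over $\fieldF[z]$.

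A small technical point I would address: one must handle the case $d_k = 0$ (when $A$ has rank less than $k$ over the fraction field $\fieldF(z)$). But $\mathrm{rank}$ over $\fieldF(z)$ is itself detectable locally --- $d_k(A) = 0$ iff $v_f(d_k(A)) = \infty$ for all $f$ iff $d_k(A)$ vanishes in some (equivalently every, by the fraction-field embedding) localization --- so the rank of $A$ equals the rank of $B$ under the hypothesis, and the argument above applies to the nonzero determinantal divisors while the zero ones match automatically. This is the direction relevant to the paper's broader claim that the strict-equivalence and Smith-form arguments remain valid even when $\det \P(z) \equiv 0$.

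The main obstacle is purely expository rather than mathematical: making precise that ``agreeing in every localization'' is genuinely a statement about all irreducibles $f$ simultaneously (a single $f$ says nothing), and that the passage from local valuation data back to a global polynomial identity is exactly the statement that $\fieldF[z]$ is a PID with unique factorization --- so the lemma is really a restatement, in the language of localizations, of the fact that Smith normal form is governed by determinantal divisors and that these are determined by their local behaviour. I would phrase the proof to make the determinantal-divisor characterization of equivalence the pivot, since that is the cleanest bridge between the global and local pictures.
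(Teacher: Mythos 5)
Your proof is correct and takes essentially the same route as the paper: both reduce equivalence to equality of Smith normal forms, observe that the Smith form over the discrete valuation ring $\fieldF[z]_f$ is exactly the $f$-part of the global Smith form, and then reassemble the global invariant factors from their $f$-parts over all irreducibles $f$. The only cosmetic difference is that you route the reassembly step through determinantal divisors and $f$-adic valuations, while the paper directly factors each invariant factor as $s_k = f^{e_k} g_k$ and absorbs the unit $g_k$ into the cofactor; your version does spell out the "local implies global" direction (and the degenerate $d_k = 0$ case) a bit more explicitly than the paper, which leaves that reassembly step as a remark.
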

\begin{proof}
Let $\S = \diag(s_1, \ldots,s_n)$ be the Smith normal form of $A$.   
There are unimodular $\E, \F\in \fieldF[z]$ such that $\A = \E\S\F$. Let $f(z)$ be an irreducible and 
suppose $g_k[z]$ and $e_k$ are such that $s_k = f^{e_k}g_k$ with $g$ relatively prime to $f$. Then the matrix $\U = \diag(g_1, \ldots, g_n)$ is unimodular over $\fieldF[z]_f$ and 
$\A = \E\U\S_f\F$, where $\S_f=\diag(f^{e_1},\ldots,f^{e_n})$ is in Smith normal form and is the unique Smith form of $\A$ locally at $f$.  Thus the powers of $f$ in the Smith form over $\fieldF[z]$ form precisely the Smith form locally over $\fieldF[z]_f$.  Matrices $\A,\B$ are equivalent precisely when they have the same Smith form.  As we've just shown, this happens if and only if they have the same Smith forms locally at each irreducible $f$.
\end{proof}
We remark that it would also work to approximate the localizations and do computations over $\fieldF[x]/\langle f^e\rangle$ for sufficiently large $e$.  $e = n\ell$ would do since the degree of any minor of $P$ is bounded by $n\ell$.  For a thorough treatment on the local approach to Smith form, see \cite{WILKENING20111}.

Assuming that $\L$ is equivalent to 
$\diag(\P, \I_n,\ldots, \I_n)$ (which we show next), 
one can readily produce unimodular cofactors using a Smith form algorithm.  Let $\S$ be the Smith form of $\P$; then the Smith form of $\L$ is $\diag(\S, \I_n, \ldots, \I_n)$.  Let the unimodular cofactors be $\A$, $\B$, $\C$, $\D$ such that $\P=\A\S\B$ and $\L = \C\diag(\S,\I_n,\ldots, \I_n)\D$.  
Then, using the cofactors 
$$\E = \C\diag(\A^{-1}, \I_n, \ldots,\I_n) \mbox{ and } 
F = \diag(\B^{-1},\I_n,\ldots,\I_n)\D,$$ one has $L = \E\diag(\P,\I_n, \ldots,\I_n)\F$.

\begin{theorem}
Let $\tau$ be a list of $\ell+1$ distinct nodes in $\fieldF$.
Let $\P\in \fieldF[z]^{n\times n}$ of degree no larger than $\ell$. Then $\L$ is equivalent to $\diag(\P, \I_n, \ldots, \I_n)$, in which $\L$ is the Lagrange interpolation linearization of $\P$ and there are $\ell+1$ identity blocks in the block diagonal equivalent.
\end{theorem}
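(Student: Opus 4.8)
\emph{Proof proposal.} The plan is to reduce the global statement to a family of purely local ones via the localization Lemma just proved: $\L$ and $\diag(\P,\I_n,\ldots,\I_n)$ are equivalent over $\fieldF[z]$ if and only if they are equivalent over every localization $\fieldF[z]_f$ at an irreducible $f$, which (by the argument in that Lemma) amounts to their Smith forms agreeing locally at each $f$. Since $\L$ is the arrowhead pencil \eqref{eq:LagrangePencil}, I would split into two cases according to whether the irreducible $f$ divides the node polynomial $w(z)=\prod_{k=0}^{\ell}(z-\tau_k)$. Only the case $f\mid w$, i.e.\ $f$ associate to some $z-\tau_j$, is non-immediate, and it is there that the barycentric structure does the work.

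When $f\nmid w$, every factor $z-\tau_k$ is a unit in $\fieldF[z]_f$, so the bottom-right block $D=\diag\big((z-\tau_\ell)\I_n,\ldots,(z-\tau_0)\I_n\big)$ of $\L$ is invertible over the localization. Block elimination of the arrowhead against $D$ — a unimodular equivalence over $\fieldF[z]_f$ — then reduces $\L$ to $\diag(S,D)$, where $S$ is the Schur complement $S=-[-\P_\ell\ \cdots\ -\P_0]\,D^{-1}\,[\beta_\ell\I_n;\cdots;\beta_0\I_n]=\sum_{k=0}^{\ell}\tfrac{\beta_k}{z-\tau_k}\P_k=\P(z)/w(z)$ by the first barycentric form for $\P$. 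Because $w(z)$ and each $z-\tau_k$ are units in $\fieldF[z]_f$, the matrix $S$ has the same local Smith form as $\P(z)$ and $D$ has the same local Smith form as $\I_{(\ell+1)n}$; hence $\L$ and $\diag(\P,\I_n,\ldots,\I_n)$ agree in Smith form locally at $f$. (Carried out globally, the same computation reproves $\det\L(z)=\det\P(z)$, so the statement is at least consistent with what is known.)

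When $f=z-\tau_j$, the factor $z-\tau_j$ is no longer a unit, but every $z-\tau_k$ with $k\neq j$ still is, and — the key point — the barycentric weight $\beta_j$ is a nonzero field element, hence a unit. I would pivot on the block $\beta_j\I_n$ in the arrowhead stem to clear the first block column and split off a unit $\I_n$ summand, then pivot in turn on the surviving unit diagonal blocks $(z-\tau_k)\I_n$, $k\neq j$, to split each of those off as well. A short computation using the partial-fraction identity \eqref{eq:parfrac} together with $\P(z)=w(z)\sum_k\tfrac{\beta_k}{z-\tau_k}\P_k$ shows that the entry left in the corner is $\pm\tfrac{z-\tau_j}{w(z)}\P(z)=\pm\big(\prod_{k\neq j}(z-\tau_k)\big)^{-1}\P(z)$, i.e.\ a unit multiple of $\P(z)$ over $\fieldF[z]_f$. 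Thus the local Smith form of $\L$ at $f$ equals that of $\P$, which equals that of $\diag(\P,\I_n,\ldots,\I_n)$. Combining the two cases and applying the Lemma finishes the proof.

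I expect the main obstacle to be exactly the node case $f=z-\tau_j$: the uniform Schur-complement reduction used everywhere else breaks down because $D$ is singular at $z=\tau_j$, so one must instead exploit the arrowhead shape — above all the nonvanishing of $\beta_j$ — to perform the elimination, and the bookkeeping that collapses the corner to a unit times $\P(z)$ via the barycentric identities is the one calculation that must be done with care.
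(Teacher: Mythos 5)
Your proposal is correct and takes essentially the same route as the paper: reduce global equivalence to equivalence over every localization $\fieldF[z]_f$ via the Lemma, then read off the local Smith form of the arrowhead pencil by Schur-complement-style elimination against the blocks that become units, using the partial-fraction identity~\eqref{eq:parfrac} and the first barycentric form of $\P$ to collapse what remains to a unit multiple of $\P(z)$. The only difference is organizational: you split into two separate computations ($f\nmid w$ via a Schur complement against the full diagonal block, and $f=z-\tau_j$ via pivots on $\beta_j\I_n$ and the remaining $(z-\tau_k)\I_n$), whereas the paper runs a single computation — Schur complement against the block $\Z=\diag\bigl((z-\tau_{\ell-1})\I_n,\ldots,(z-\tau_0)\I_n\bigr)$ followed by an explicit $2n\times 2n$ reduction using $\beta_\ell^{-1}$, $w$, and $w_\ell$ — which is valid over $\fieldF[z]_f$ for every $f$ coprime to $w_\ell$, thus covering both the generic case and $f=z-\tau_\ell$ at once, with the remaining nodes handled by symmetry. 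Both routes are sound; the paper's uniform Schur complement just saves the case split, and your worry about the node-case bookkeeping is resolvable in the way you anticipate (the corner does come out to a unit multiple of $\P(z)$).
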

\begin{proof} We'll show the equivalence for each localization at an irreducible.  From this the equivalence over $\fieldF[z]$ follows. 
We show the equivalence locally at any $f$ relatively prime to $w_\ell$.  
This captures 
 the general case where $f$ is relatively prime to all $z - \tau_k$ as well as 
 the special case $f = z-\tau_\ell$.  
 The cases for $f = z -\tau_k, k < \ell$ follow by symmetry.
 
Let $\Z = \diag((z-\tau_{\ell-1})I_n, \ldots,(z-\tau_0)I_n),  
\R = \begin{bmatrix}
\P_{\ell-1}  & \ldots & \P_0
\end{bmatrix}$,
and 
$\B = \begin{bmatrix}
\beta_{\ell-1} \I_n & \ldots & \beta_0\I_n
\end{bmatrix}^T$. 
Then we have the block form
$$
\L =
\begin{bmatrix}
0 & \P_\ell & \R \\
-\beta_\ell\I_n & (z-\tau_\ell)\I_n & 0\\
-\B & 0 & \Z \\
\end{bmatrix}
$$
Note that $w$ is a unit locally at $f$ so that $Z$ is unimodular.
Thus $\L$ is equivalent to $\S\oplus Z$ and to $\S\oplus I_{n\ell}$, where $\S$ is the following Schur complement of $\Z$ in $\L$.
$$
\begin{bmatrix}
0 & \P_\ell\\
-\beta_\ell\I_n & (z-\tau_\ell)\I_n\\
\end{bmatrix}
-
\begin{bmatrix}
\R \\ 0 \\ 
\end{bmatrix}
\Z^{-1}
\begin{bmatrix}
-\B & 0 \\
\end{bmatrix}
=
\begin{bmatrix}
\R\Z^{-1}\B & \P_\ell\\
-\beta_\ell\I_n & (z-\tau_\ell)\I_n\\
\end{bmatrix}.
$$
 Then we may manipulate this block into the desired form $\diag(\P, \I_n)$.
$$
\begin{bmatrix}
-\I_n & -\beta_\ell^{-1}\R\Z^{-1}\B\\
0 & -\beta_\ell^{-1}\I_n\\
\end{bmatrix}
\begin{bmatrix}
\R\Z^{-1}\B & \P_\ell\\
-\beta_\ell\I_n & (z-\tau_\ell)\I_n\\
\end{bmatrix}
\begin{bmatrix}
-w\I_n & \I_n\\ 
-w_\ell\I_n & 0\\
\end{bmatrix}.
$$
$$
=
\begin{bmatrix}
w\R\Z^{-1}\B + w_\ell\P_\ell  & 0 \\
0 & \I_n
\end{bmatrix}.
$$
Expanding the leading block, we see that it is $\P$:
$$
w \sum_{k=0}^{\ell-1}\P_k (z-\tau_k)^{-1} \beta_k +w_\ell \P_\ell = \sum_{k=0}^\ell w_k\P_k = \P.
$$
\end{proof}

\subsection{Strict Equivalence\label{sec:strictLagrange}}
We prove the following theorem, establishing the strict equivalence of the companion pencil of equation~\eqref{eq:LagrangePencil}, for a matrix polynomial $\P(z)$ determined to grade $\ell$ by interpolation at $\ell+1$ points, to the monomial basis linearization for $\P(z) = 0\cdot z^{\ell+2} + 0\cdot z^{\ell+1} + \sum_{k=0}^{\ell} \A_k z^k$ considered as a grade $\ell + 2$ matrix polynomial.  This establishes that the Lagrange basis pencil is in fact a linearization, indeed a strong linearization, independently of the singularity or not of any of the values of the matrix polynomial, and independently of the regularity of the matrix pencil.
\begin{theorem}
Provided that the nodes $\tau_k$ are distinct, then the Lagrange basis linearization in equation~\eqref{eq:LagrangePencil}, namely $\L_L(z) = z\C_{L,1} - \C_{L,0}$, is strictly equivalent to the monomial basis linearization $\L_M(z) = z\C_{M,1} - \C_{M,0}$; in other words, there exist nonsingular constant matrices $\U$ and $\W$ such that both $\U \C_{L,1}\W = \C_{M,1}$ and $\U \C_{L,0}\W = \C_{M,0}$. Explicitly, if $\V$ is the Vandermonde matrix with $(i,j)$ entry $v_{i,j} = \tau_{j-1}^{\ell+1-i}$ for $1 \le i,j \le \ell+1$, then we have
\begin{equation}
    \U = \begin{bmatrix}
    \I_n & 0 \\
    0 & \V\otimes \I_n
    \end{bmatrix}
\end{equation}
and, if the node polynomial $w(z) = \prod_{k=0}^\ell(z-\tau_k) = z^{\ell+1} + q_\ell z^\ell + \cdots + q_0$ has coefficients $q_k$, which we place in a row vector $\mat{q} = [q_\ell, q_{\ell-1}, \ldots, q_0]$,
\begin{equation}
    \W = \begin{bmatrix}
    \I_n & \mat{q} \\
    0 & \V^{-1} \otimes \I_n
    \end{bmatrix}\>.
\end{equation}
\end{theorem}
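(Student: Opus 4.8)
The plan is to verify the single pencil identity $\U\,\L_L(z)\,\W=\L_M(z)$ directly; equating the coefficients of $z^1$ and of $z^0$ then yields the two stated equalities $\U\C_{L,1}\W=\C_{M,1}$ and $\U\C_{L,0}\W=\C_{M,0}$ simultaneously. I would partition $\L_L$, $\L_M$, $\U$ and $\W$ conformally into a leading $n\times n$ block and a trailing $(\ell+1)n\times(\ell+1)n$ block, exactly as $\U$ and $\W$ are already displayed. In that partition $\C_{L,1}$ and $\C_{M,1}$ both equal $\diag(0_n,\I_n,\ldots,\I_n)$, so the coefficient-of-$z$ identity collapses at once to $(\V\otimes\I_n)(\V^{-1}\otimes\I_n)=\I$; everything of substance lives in the constant term.

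Writing out the $2\times2$ block product $\U\,\L_L(z)\,\W$ leaves four block identities. The $(1,1)$ block is $0=0$. The $(1,2)$ block demands that $-[\,\P_\ell\ \cdots\ \P_0\,](\V^{-1}\otimes\I_n)$ be the top block row $[\,\A_\ell\ \cdots\ \A_0\,]$ of the second companion form: since the $\P_k=\P(\tau_k)$ are nodal values and the $\A_k$ the monomial coefficients of the interpolating polynomial, this is precisely the fact that passing from nodal values to monomial coefficients is multiplication by the inverse of the Vandermonde matrix of the $\tau_k$. The $(2,1)$ block demands that $(\V\otimes\I_n)$ times the column of barycentric weights $\beta_k$ equal $\pm(\mat{e}_1\otimes\I_n)$; componentwise this is the moment relation $\sum_k\beta_k\tau_k^{\,j}=\delta_{j,\ell}$ for $0\le j\le\ell$, which I would deduce from the partial-fraction expansions~\eqref{eq:parfrac}--\eqref{eq:zwz} (more precisely from $z^{j}/w(z)=\sum_k\beta_k\tau_k^{\,j}/(z-\tau_k)$, $0\le j\le\ell$, comparing growth as $z\to\infty$), or equivalently as Lagrange interpolation of $1,z,\ldots,z^\ell$.

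The only real computation is the trailing $(\ell+1)n\times(\ell+1)n$ block. The term in $z$ there collapses to $z\I$ by $\V\V^{-1}=\I$, and what is left is $(\V\otimes\I_n)\bigl(\diag(\tau_\ell,\ldots,\tau_0)\otimes\I_n\bigr)(\V^{-1}\otimes\I_n)$ adjusted by the rank-one contribution of the $\mat{q}$-row of $\W$. The first piece is the Vandermonde conjugation of the diagonal matrix of nodes, hence a companion matrix of the node polynomial $w(z)=z^{\ell+1}+q_\ell z^{\ell}+\cdots+q_0$: it is the bare subdiagonal shift $\mat{S}$ of the second companion form together with one extremal row carrying $-q_\ell,\ldots,-q_0$. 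The rank-one piece, using the $(2,1)$ identity just proved, is $\pm(\mat{e}_1\mat{q})\otimes\I_n$, whose only nonzero row is $\mat{q}=[q_\ell,\ldots,q_0]$; sitting in the same extremal row, it exactly cancels the $-q_k$ row of the companion matrix and leaves $z\I-\mat{S}$, as $\L_M$ requires. Making that cancellation land---fixing the extremal row, the signs, and the alignment of the entries of $\mat{q}$ and of the columns of $\V$ with the descending node order $\tau_\ell,\ldots,\tau_0$ used in~\eqref{eq:LagrangePencil}---is the delicate part; the three background facts themselves (Vandermonde inverse, Vandermonde conjugation of a diagonal matrix, barycentric moment relations) are completely classical. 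I expect this index-and-sign bookkeeping, not any conceptual step, to be the main obstacle.

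Unimodularity is then immediate and $z$-independent: $\det\U=(\det\V)^{n}$ and $\det\W=(\det\V)^{-n}$, and $\det\V=\pm\prod_{i<j}(\tau_j-\tau_i)\neq 0$ because the nodes are distinct, so $\U$ and $\W$ are constant invertible matrices, which is exactly what is meant by strict equivalence.
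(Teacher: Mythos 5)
Your proposal is correct and follows essentially the same route as the paper: verify $\U\,\L_L(z)\,\W=\L_M(z)$ by direct block multiplication, with nonsingularity of $\U,\W$ coming from the Vandermonde determinant, the $z$-coefficient collapsing to $(\V\otimes\I_n)(\V^{-1}\otimes\I_n)=\I$, and the constant term reducing to classical Vandermonde/Lagrange facts. The only difference is packaging: where you use $\V\,\mathrm{diag}(\tau_\ell,\ldots,\tau_0)\,\V^{-1}=$ companion matrix of $w$ plus the moment relations $\sum_k\beta_k\tau_k^{\,j}=\delta_{j\ell}$ (so that $\V\beta=\mat{e}_1\otimes\I_n$ and the rank-one $\mat{q}$ term cancels the companion's coefficient row), the paper verifies the equivalent identity $[\,\beta\ \ \beta\mat{q}+\D\V^{-1}\,]=[\,\V^{-1}\ \ 0\,]$ column-by-column from the coefficients of $z\,w_i(z)=\beta_i w(z)+\tau_i w_i(z)$ --- the same identities read on the other side of the Vandermonde conjugation, so your sign/ordering bookkeeping is exactly the step the paper also leaves at that level of detail.
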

\begin{proof}
Notice first that $\det\U = \det\V\otimes\I_n = \prod_{i<j}(\tau_j-\tau_i)^n$ is not zero, and that $\det\W$ is the reciprocal of that.  Both these matrices are therefore nonsingular if the nodes are distinct.

Next, it is straightforward to verify that $\U \C_{L,1} \W = \C_{M,1}$ by multiplying out, and using the fact that the upper left corner block of each is the zero block, and that the rest is the identity because $(\V\otimes \I_n)\cdot(\V^{-1}\otimes\I_n) = \I_{n(\ell+1)}$.

The remainder of the proof consists of detailed examination of the consequences of multiplying out the more complicated $\U\C_{L,0}\W$.  From now on we drop the $\otimes\I_n$ notation as clutter, and the proof is considered only for the scalar case, but the tensor products should be kept in mind as the operations are read.  Writing
\begin{equation}
    \C_{L,0} = \begin{bmatrix}
    0 & -\P \\
    \beta & \D
    \end{bmatrix}
\end{equation}
where $\D = \diag(\tau_\ell, \tau_{\ell-1}, \ldots, \tau_0)$, we have (using $\P\V^{-1} = \A$)
\begin{equation}
    \C_{L,0}\W = \begin{bmatrix}
    0 & -\A \\
    \beta & \beta \mat{q} + \D\V^{-1}
    \end{bmatrix}\>.
\end{equation}
We seek to establish that 
\begin{equation}
    \U^{-1}\C_{M,0} = \begin{bmatrix}
    1 & \\
      & \V^{-1}
    \end{bmatrix}
    \begin{bmatrix}
    0 & -\A_\ell &         &\cdots & \A_0 \\
    1 & 0       &        &      &\\
      & 1       & 0      &      &\\
      &         & 1      & \ddots & \\
      &&& 1 & 0
    \end{bmatrix}
\end{equation}
is the same matrix.  This means establishing that 
\begin{equation}
    \begin{bmatrix}
    \beta & \beta\mat{q} + \D\V^{-1}
    \end{bmatrix}
    = \begin{bmatrix}
    \V^{-1} & 0
    \end{bmatrix}\>.\label{eq:leftshift}
\end{equation}

Begin by relating the monomial basis to the Lagrange basis:
\begin{equation}
    \begin{bmatrix}
    z^\ell \\
    z^{\ell-1} \\
    \vdots\\
    z\\
    1
    \end{bmatrix} = 
    \begin{bmatrix}
    \tau_\ell^\ell & \tau_{\ell-1}^\ell & \cdots & \tau_0^\ell \\
    \tau_{\ell}^{\ell-1} & \tau_{\ell-1}^{\ell-1} & \cdots & \tau_0^{\ell-1} \\
    \vdots & & & \vdots \\
    \tau_{\ell} & \tau_{\ell-1} & & \tau_0\\
    1 & 1 & \cdots & 1 
    \end{bmatrix}
    \begin{bmatrix}
    w_{\ell}(z) \\
    w_{\ell-1}(z)\\
    \vdots\\
    w_1(z)\\
    w_0(z)
    \end{bmatrix}\>,
    \label{eq:LagrangeChangeBasis}
\end{equation}
where the $w_k(z)$ are the Lagrange basis polynomials
$w_k(z) = \beta_k \prod_{j\ne k}(z-\tau_j)$.  The matrix with the powers of $\tau_k$ is, of course, just $\V$.  


Name the columns of $\V^{-1} = [\Lambda_\ell, \Lambda_{\ell-1}, \ldots, \Lambda_0 ]$.
We thus have to show that the final column of equation~\eqref{eq:leftshift} is $0$ and that
\begin{equation}
    \Lambda_{j-1} = \beta q_j + \D\Lambda_j \label{eq:vectorversion}
\end{equation}
for $j=1$, $2$, $\ldots$, $\ell$, and that $\Lambda_\ell = \beta$.  
But this is just a translation into matrix algebra of the $\ell+2$ polynomial coefficients of
\begin{equation}
    \beta_i w(z) = (z-\tau_i)w_j(z)\qquad 0 \le i \le \ell\>,
\end{equation}
or $zw_i(z) = \beta_i w(z) + \tau_i w_i(z)$.  The coefficients of different powers of $z$ in these identities provide the columns in the matrix equation~\eqref{eq:leftshift}.
This completes the proof.
\end{proof}
\section{Concluding Remarks}
This paper shows how to use scalar matrix tools---specifically the Maple code for computing the Hermite normal form---and computation with low-dimensional examples to solve problems posed for \emph{matrix polynomials} of arbitrary dimension and arbitrary block size.  Current symbolic computation systems do not seem to offer facilities for direct computation with such objects.

The mathematical problems that we examined included the explicit construction of unimodular matrix polynomial cofactors $\E(z)$ and $\F(z)$ which would demonstrate that a given linearization $\L(z)$ for a matrix polynomial $\P(z)$ was, indeed, a linearization.  

We showed that the approach  discovered cofactors that were valid \emph{generically}.  This is because the primary tool used here, namely the Hermite normal form, is discontinuous at special values of the parameters (and indeed discovery of those places of discontinuity is the main purpose of the Hermite and Smith normal forms).  Nonetheless, in the case of the monomial basis and others we were able to guess such a universal form (by replacing a leading coefficient block by the identity block) from the HNF.  A separate investigation, based on the change-of-basis matrix but also inspired by experimental computation, found new explicit universal cofactors for all bases considered here.  We also introduced a new reversal for polynomials expressed in the Bernstein basis which may have better numerical properties than the standard reversal does.

For the Bernstein basis, which is symmetric under $z \to 1-z$, we were initially puzzled that the Hermite analogue had a problem if $\P(1)$ was singular but not when $\P(0)$ was singular.  This asymmetry disappears by considering instead a Hermite analogue of the form
\begin{equation}
    \H = \begin{bmatrix}
    \P(z) & & & & \\
    \H_{\ell-2} & \I_n & & & \\
    \vdots & & \ddots & & \\
    \H_0 & & & & \I_n
    \end{bmatrix}\>.
\end{equation}
This works only if $\P(0)^{-1}$ exists.

We plan to apply this method to Hermite interpolational bases, where (as previously for Lagrange interpolational bases) only local linearizations are currently known in the literature.

\begin{acks}
RMC thanks Froil\'an Dopico for several very useful discussions on linearization, giving several references, and pointing out the difference between local linearization and linearization. The support of Western University's New International Research Network grant is also gratefully acknowledged.
\end{acks}

\bibliographystyle{plainnat}
\bibliography{references}

\end{document}